\documentclass[a4paper]{amsart}

\usepackage[utf8]{inputenc} % set input encoding (not needed with XeLaTeX)
\usepackage[british]{babel} %otherwise words break in horrible places
\usepackage{culmus} %hebrew letters
\usepackage{dsfont}

\usepackage{hyperref}

\begin{hyphenrules}{british}
\end{hyphenrules}

\usepackage{graphicx} % support the \includegraphics command and options
\usepackage{booktabs} % for much better looking tables
\usepackage{array} % for better arrays (eg matrices) in maths
\usepackage{paralist} % very flexible & customisable lists (eg. enumerate/itemize, etc.)
\usepackage{verbatim} % adds environment for commenting out blocks of text & for better verbatim
\usepackage{subfig} % make it possible to include more than one captioned figure/table in a single float
\usepackage{scalerel,stackengine} %per usare \reallywidehat
\usepackage{bmpsize}
\usepackage{amssymb}
\usepackage{mathtools}
\usepackage{amsmath}	
\usepackage{amssymb}
\usepackage{amsthm}
\usepackage[mathcal]{eucal}
\usepackage{graphicx}
\usepackage{faktor} %"diagonal" quotient style
\usepackage{mathrsfs} % calligraphy letters, other style
\usepackage[all, cmtip]{xy} % good looking and flexible matrices
\usepackage{systeme}
\usepackage[titletoc]{appendix}
\usepackage{leftindex}
\usepackage{enumitem}
\usepackage{appendix}
\usepackage{yhmath} %Really big hats using \widehat

\theoremstyle{plain}
\newtheorem{theorem}[subsection]{Theorem}

\newtheorem{proposition}[subsection]{Proposition}
\newtheorem{corollary}[subsection]{Corollary}

\theoremstyle{definition}
\newtheorem{definition}[subsection]{Definition}

\theoremstyle{remark}

\newtheorem{examples}[subsection]{Examples}

\stackMath
\newcommand\reallywidehat[1]{%
\savestack{\tmpbox}{\stretchto{%
  \scaleto{%
    \scalerel*[\widthof{\ensuremath{#1}}]{\kern-.6pt\bigwedge\kern-.6pt}%
    {\rule[-\textheight/2]{1ex}{\textheight}}%WIDTH-LIMITED BIG WEDGE
  }{\textheight}%
}{0.5ex}}%
\stackon[1pt]{#1}{\tmpbox}%
}
\newcommand{\pullback}[4]{{#1 \leftindex_{#2}{\times}_{#3} #4}}

\usepackage{color}

\newcommand{\E}{\mathrm{E}}
\newcommand{\I}{\mathrm{I}}
\newcommand{\X}{\mathbb{X}}
\newcommand{\Y}{\mathbb{Y}}

\DeclareSymbolFont{alephbet}{HE8}{frank}{m}{n}
\SetSymbolFont{alephbet}{bold}{HE8}{frank}{b}{n}
\DeclareMathSymbol{\samech}{\mathord}{alephbet}{"F1}
\DeclareMathSymbol{\mem}{\mathord}{alephbet}{"EE}

\usepackage{chngcntr}
\counterwithin*{equation}{section}

\def\pullback{% with thanks to Valerian Even
 \ar@{-}[]+R+<4pt,-3pt>;[]+RD+<4pt,-6pt>%
 \ar@{-}[]+D+<3pt,-6pt>;[]+RD+<4pt,-6pt>}
 
\def\pushout{%
  \ar@{-}[]+U+<-2pt,5pt>;[]+LU+<-4pt,5pt>% vertical line up-left
  \ar@{-}[]+L+<-4pt,2pt>;[]+LU+<-4pt,5pt>}% horizontal line left-up     

\begin{document}

\title[Actions, semidirect products and crossed semimodules$\dots$]{Actions, semidirect products and crossed semimodules in the category of small categories with a fixed set of objects}

\author[S.~Ambra]{Stefano Ambra}
\address[Stefano Ambra]{Dipartimento di Matematica ``Federigo Enriques'', Universit\`{a} degli Studi di Milano, Via Saldini 50, 20133 Milano, Italy}
\email{stefano.ambra@unimi.it}

\begin{abstract}
We generalize to the fibres of the fibration $\mathcal{O}\colon\mathbf{Cat}\rightarrow\mathbf{Set},$ defined by mapping a small category $\mathbb{X}$ to its set of objects $X_0=ob(\mathbb{X}),$ the classical notions of action and semidirect product of monoids. We prove that the equivalence between monoid actions of a monoid $Y$ and Schreier split extensions on $Y,$ which is well known to generalize the equivalence between actions and split extensions for groups, is an instance of a broader adjunction between Schreier points and actions in the fibres $\mathcal{O}^{-1}(B).$ This adjunction is an equivalence if and only if $B=1,$ i.e., for the category $\mathbf{Mon}$ of monoids. Similarly, we prove that there is an adjunction (which, in the case of monoids, results in a known equivalence due to Patchkoria) between Schreier internal categories in the fibres $\mathcal{O}^{-1}(B)$ and the category of crossed semimodules in $\mathcal{O}^{-1}(B).$ The latter are defined by translating in $\mathcal{O}^{-1}(B)$ the notion of crossed semimodule in $\mathbf{Mon}.$ Eventually, we prove that, by defining crossed modules appropriately, this last adjunction yields an equivalence between crossed modules and Schreier internal groupoids in the fibres of $\mathcal{O}.$
\end{abstract}

\subjclass[2020]{18E13, 18G45, 18D40}

\keywords{categories with a fixed set of objects, action, semidirect product, crossed semimodule}
\maketitle
%----------------------------------------------------------------------------------------------------------------------
\section{Introduction}

%------------------------------------------------------------------------------------------------------------------------
Let $\mathbf{Cat}$ denote the category of small categories and functors. The \emph{set-of-objects} functor
\begin{equation}
\label{eqn:O}
\mathcal{O}\colon \mathbf{Cat}\longrightarrow\mathbf{Set}, \ \mathbb{X}\mapsto ob(\mathbb{X}),
\end{equation}
is then a Grothendieck fibration, whose fibres $\mathcal{O}^{-1}(B)\coloneqq\mathbf{Cat}_B$ are Barr-exact \cite{barr} (see~\cite{tour}), while the category $\mathbf{Cat}$ itself is not even regular (see for example~\cite{foundations} for a discussion on epimorphisms in $\mathbf{Cat}$).

When $B=1$ is a terminal object in $\mathbf{Set},$ the fibre $\mathbf{Cat}_1$ is equivalent to the category $\mathbf{Mon}$ of monoids and monoid homomorphisms, where the classical notions of action and semidirect product are available. 

Building on these notions, A. Patchkoria proved in~\cite{P-semi} that the well-known equivalence $\mathbf{Xmod}\cong\mathrm{Cat}(\mathbf{Gp})$ between crossed modules and internal categories in the category of groups extends to an equivalence $\mathbf{Xsmod}\cong\mathrm{SCat}(\mathbf{Mon})$ between crossed \emph{semimodules} (a generalization to monoids of the notion of crossed module) and \emph{Schreier} internal categories in $\mathbf{Mon},$ i.e. internal categories
\begin{equation*}
\xymatrix{{\mathbb{X}:X_2\cong{X_1\times_{X_0}X_1}} \ar[r]^-{m} &{X_1} \ar@<.9ex>[r]^-{d} \ar@<-.9ex>[r]_-{c} &{X_0} \ar[l]|{u} }
\end{equation*}
in $\mathbf{Mon}$ with the following factorization property: If $\xymatrixcolsep{1.5pc}\xymatrix{k\colon K(d) \ar@{>->}[r] &X_1}$ is a fixed kernel of $d,$ for every $x\in X_1$ there exists a unique $a\in K(d)$ such that $x=k(a)+ud(x)$ ($+$ denoting the monoid operation on $X_1$). 

A pair $(f,s)$ of monoid homomorphisms $\xymatrix{X \ar@<.5ex>[r]^-f  &Y \ar@<.5ex>[l]^-s}$ such that $fs=1_Y$ and satisfying the same factorization property of the pair $(d,u)$ above is called a \emph{Schreier point} in $\mathbf{Mon}$ (on $Y$), and one can prove that such Schreier points correspond to monoid actions (namely, monoid homomorphisms $Y\rightarrow\mathrm{End}(X)$ of a monoid $Y$ into the monoid of endomorphisms of a monoid $X$), in the same way as split short exact sequences correspond to actions for groups (and indeed for any semi-abelian category, see \cite{bourn-janelidze}).

Moreover, the category of monoids, which is neither protomodular \cite{bourn-proto} nor Mal'tsev \cite{carboni-lambeck-pedicchio}, is in fact \emph{relatively} so (viz., it is $S$-protomodular in the sense of \cite{S-protomodular}) with respect to the class of Schreier points.

Now, Schreier points can be defined in every fibre $\mathbf{Cat}_B$ of the functor \eqref{eqn:O}, and it is possible to prove that for any set $B$ the category $\mathbf{Cat}_B$ is $S$-protomodular, and hence $S$-Mal'tsev (see \cite{mal'tsev-reflection}), with respect to this class of points. 

A first easy observation, which is the starting point for Section \ref{sec:actions} below, is that, all the same, for any set $B$ and every pair of objects $\mathbb{X}, \mathbb{Y}\in\mathbf{Cat}_B,$ one can consider a notion of action $\alpha$ of $\mathbb{Y}$ on $\mathbb{X}$ in $\mathbf{Cat}_B,$ and form a semidirect product $\mathbb{X}\rtimes_{\alpha}\mathbb{Y}$ of $\mathbb{X}$ and $\mathbb{Y}$ accordingly. In the case $B=1,$ both the action and the semidirect product so introduced coincide with the usual ones for monoids, but it is to be remarked that, contrary to the case of monoids, when $|B|\geq 2$ and $\mathbb{X},$ $\mathbb{Y}$ are not totally disconnected, the set of arrows of the semidirect product $\mathbb{X}\rtimes_{\alpha}\mathbb{Y}$ is different from the set of arrows of the binary product $\mathbb{X}\times\mathbb{Y}$ in $\mathbf{Cat}_B.$ 

It is then the main result of Section \ref{sec:actions} that the equivalence between Schreier points on a monoid $Y$ and monoid actions of $Y$ is an instance of a more general adjunction 
\begin{equation*}
\xymatrix{{SPt_{\mathbb{Y}}} \ar@/^0.7pc/[r]  \ar@{}[r]|{\bot } &{\mathrm{Act}_{\mathbb{Y}}}\ar@/^0.7pc/[l]  }
\end{equation*}
between Schreier points on $\mathbb{Y}$ in $\mathbf{Cat}_B$ and actions in $\mathbf{Cat}_B$ of the small category $\mathbb{Y}.$ This adjunction is an equivalence if and only if $B=1.$

Similarly, using these notions of action and semidirect product, in Section \ref{sec:crossed} we generalize to $\mathbf{Cat}_B,$ for an arbitrary set $B,$ the definition of crossed semimodule of \cite{JS, P-semi}. Again, the main result here is that there is an adjunction 
\begin{equation*}
\xymatrix{ {\mathrm{SCat}(\mathbf{Cat}_B)} \ar@/^0.7pc/[r]  \ar@{}[r]|{\bot} &{\mathbf{Xsmod}(\mathbf{Cat}_B)} \ar@/^0.7pc/[l]  }
\end{equation*}
between Schreier internal categories in $\mathbf{Cat}_B$ and the category of crossed semimodules in $\mathbf{Cat}_B,$ which is an equivalence only in the case of monoids. 

The restriction of the left adjoint $\mathrm{SCat}(\mathbf{Cat}_B)\longrightarrow\mathbf{Xsmod}(\mathbf{Cat}_B)$ to the full subcategory $\mathrm{SGpd}(\mathbf{Cat}_B)\subseteq\mathrm{SCat}(\mathbf{Cat}_B)$ of Schreier internal groupoids in $\mathbf{Cat}_B$ yields an equivalence, for every set $B,$ between the latter and the crossed semimodules $\delta\colon\mathbb{X}\rightarrow\mathbb{Y}$ whose domain is a totally disconnected groupoid. Accordingly, we call such a $\delta$ a crossed \emph{module} in $\mathbf{Cat}_B.$

The motivating reason for our study of these notions in the fibres of \eqref{eqn:O}, as it will be shown in a further paper \cite{ADM}, is that they allow for a categorical description in terms of \emph{direction functors} \cite{baer-sums, aspherical, bourn-rodelo} of the cohomology groups $H^n(\mathbb{Y},A)$ of a small category $\mathbb{Y}\in\mathbf{Cat}_B$ with coefficients in a $\mathbb{Y}$-module $A$ (that is, a functor $A\colon\mathbb{Y}\rightarrow\mathbf{Ab}$ whose codomain is the category of abelian groups), in the sense of \cite{cocat, golasinski}. 
%----------------------------------------------------------------------------------------------------------------------------
\section{Actions and Schreier points in $\mathbf{Cat}_B$}
\label{sec:actions}
Let $B$ be a set. As in the Introduction, we shall denote by $\mathbf{Cat}_B$ the fibre $\mathcal{O}^{-1}(B)$ of the set-of-objects functor \eqref{eqn:O}. A morphism $\mathbb{X}\rightarrow\mathbb{Y}$ in $\mathbf{Cat}_B$ is then a functor
\begin{equation}
\label{eqn:morphism}
\begin{aligned}
{
\xymatrixcolsep{0.5pc}
\xymatrix{
{\mathbb{X}} \ar[d] &{:}\\
{\mathbb{Y}} &{:}
}
}
\xymatrix{
{X_2} \ar[d]_-{F_2} \ar[r]^-{m_X} &{X_1} \ar[d]_-{F_1} \ar@<.9ex>[r]^-{d_X} \ar@<-.9ex>[r]_-{c_X} &{B} \ar[l]|-{u_X} \ar@{=}[d] \\
{Y_2}  \ar[r]_-{m_Y} &{Y_1} \ar@<.9ex>[r]^-{d_Y} \ar@<-.9ex>[r]_-{c_Y} &{B} \ar[l]|-{u_Y}
}
\end{aligned}
\end{equation}
 which is the identity on objects. The map $F_2\colon X_2\cong X_1\times_{B}X_1\rightarrow Y_1\times_BY_1\cong Y_2$ between the sets of composable arrows is given by $(x_1,x_2)\mapsto\big(F_1(x_1),F_1(x_2)\big),$ so that \eqref{eqn:morphism} is completely determined by the map $F_1\colon X_1\rightarrow Y_1$ between the sets of arrows. Accordingly, we shall simply use the notation $F$ both for the functor $F\colon\mathbb{X}\rightarrow\mathbb{Y}$ and the map $F=F_1\colon X_1\rightarrow Y_1.$
 
Given a morphism $F\colon\X\rightarrow\Y$ in $\mathbf{Cat}_B,$ we shall denote by $\mathbb{K}(F)$ the pullback
\[ \xymatrix{
\mathbb{K}(F) \pullback \ar[d] \ar@{>->}[r] &{\X} \ar[d]^-{F} \\
\Delta_B \ar@{>->}[r]_-{u_Y} &{\Y,}
} \]
where $\Delta_B$ is the discrete category on $B$ and $u_Y$ is the units map of $\Y.$ (It can be realized so as to have the disjoint union $K(F)\coloneqq\bigsqcup_{b\in B}F^{-1}(1_b)$ as set of arrows.)
 
Moreover, for a small category $\xymatrix{{\mathbb{X}:X_2} \ar[r]^-{m} &{X_1} \ar@<.9ex>[r]^-{d} \ar@<-.9ex>[r]_-{c} &{B,} \ar[l]|-{u}}$ we shall often use the notation $u(b)=1_b$ to denote the identity arrow of $b\in B,$ and the notation $x_2\cdot x_1\colon a\rightarrow c$ to denote the composition $m(x_1,x_2)$ of $x_1\colon a\rightarrow b$ and $x_2\colon b\rightarrow c$ in $\mathbb{X}.$ 

Now, classically, when $B=1$ and $X,Y\in\mathbf{Cat}_1\cong\mathbf{Mon}$ are monoids, an action of $Y$ on $X$ is a monoid homomorphism $Y\rightarrow\mathrm{End}(X)$ between $Y$ and the monoid $\mathrm{End}(X)$ of monoid endomorphisms of $X.$

When $\mathbb{X}\in\mathbf{Cat}_B$ for an arbitrary set $B,$ we can easily mimic the definition of $\mathrm{End}(X)$ as follows. For all $a,b\in B,$ define
\[
\mathbb{X}(a,b)\coloneqq\{x\in X_1:d(x)=a \ \& \ c(x)=b\};
\]
it follows that, for any $b\in B,$ the set $\mathbb{X}(b,b)$ is a monoid under the composition of $\mathbb{X},$ with neutral element $1_b,$ and we can consider the set
\[
\mathrm{E}_{\mathbb{X}}(a,b)\coloneqq\mathbf{Mon}\big(\mathbb{X}(a,a),\mathbb{X}(b,b)\big)
\]
of all monoid homomorphisms with domain $\mathbb{X}(a,a)$ and codomain $\mathbb{X}(b,b).$ 

Consider then the small category 
\[
\xymatrix{{\mathrm{E}(\mathbb{X}):\E_2(\mathbb{X})} \ar[r]^-{m_E} &{\E_1(\mathbb{X})} \ar@<.9ex>[r]^-{d_E} \ar@<-.9ex>[r]_-{c_E} &{B} \ar[l]|-{u_E}}
\] 
whose arrows $f\in E_1(\mathbb{X})$ with domain $d_E(f)=a$ and codomain $c_E(f)=b$ are the elements of the set $\mathrm{E}_{\mathbb{X}}(a,b).$ The composition in $\mathrm{E}(\mathbb{X}),$ where defined, is given by the composition of morphisms in $\mathbf{Mon},$ and units are defined by the identity morphisms $u_E(b)=id_{\X(b,b)}\in \mathrm{E}_{\mathbb{X}}(b,b).$ 

Define also the subcategory 
\begin{equation}
\label{eqn:cat_I}
\xymatrix{{\mathrm{I}(\mathbb{X}):\I_2(\mathbb{X})} \ar[r]^-{m} &{\I_1(\mathbb{X})} \ar@<.9ex>[r]^-{d} \ar@<-.9ex>[r]_-{c} &{B} \ar[l]|-{u}}
\end{equation}
of $\mathbb{X},$ whose arrows are given by the disjoint union $\I_1(\mathbb{X})\coloneqq\bigsqcup_{b\in B}\mathbb{X}(b,b).$

Observe that $\I(\mathbb{X})$ is totally disconnected by construction (that is, no arrow $a\rightarrow b$ exists in $\I(\mathbb{X})$ if $a\neq b$), and that one has $\I(\mathbb{X})=\mathbb{X}$ if and only if $\mathbb{X}$ is totally disconnected. On the contrary, $\E(\mathbb{X})$ is never totally disconnected if $|B|\geq 2,$ not even when $\mathbb{X}$ is, because for any $a\neq b$ in $B$ one can consider the trivial monoid homomorphism $\mathbb{X}(a,a)\rightarrow \mathbb{X}(b,b),$ $x\mapsto 1_b.$ Moreover, it is clear that $\E(\mathbb{X})=\E\big(\I(\mathbb{X})\big),$ and that for $B=1$ (in which case $\mathbb{X}$ corresponds to an ordinary monoid $X$) the category $\E(\mathbb{X})$ corresponds to the monoid $\mathrm{End}(X).$ 
\begin{definition}
\label{def:action}
If $\mathbb{X}, \mathbb{Y}\in\mathbf{Cat}_B,$ an \emph{action} of $\mathbb{Y}$ on $\mathbb{X}$ is a morphism $\alpha\colon\mathbb{Y}\rightarrow\E(\mathbb{X})$ in $\mathbf{Cat}_B.$ 
\end{definition}
\begin{examples}
\label{ex:actions}
\begin{enumerate}
\item When $\mathbb{Y}$ is totally disconnected, one can always consider the trivial action $\alpha(y\colon b\rightarrow b)=u_E(b).$ 
\item Let $B=\{a,b\}$ and define $\mathbb{Y}$ as the category having as arrows the identity maps $1_a,$ $1_b$ and a unique non-trivial arrow $\mathbb{Y}(a,b)=\{\ast\}.$ Then, for any $\mathbb{X}\in\mathbf{Cat}_B$ and any fixed $f\in\mathbf{Mon}\big(\mathbb{X}(a,a),\mathbb{X}(b,b)\big),$ the map $\alpha(\ast)=f$ gives an action of $\mathbb{Y}$ on $\mathbb{X}.$
\end{enumerate}
\end{examples}
We can now define:
\begin{definition}
Let $\alpha\colon\mathbb{Y}\rightarrow\E(\mathbb{X})$ be an action in $\mathbf{Cat}_B.$ The \emph{semidirect product} $\mathbb{X}\rtimes_{\alpha}\mathbb{Y}$ in $\mathbf{Cat}_B$ is the small category 
\begin{equation}
\label{eqn:semidirect}
\xymatrix{{\mathbb{X}\rtimes_{\alpha}\mathbb{Y}:P_2(\mathbb{X},\mathbb{Y})} \ar[r]^-{m_P} &{P_1(\mathbb{X},\mathbb{Y})} \ar@<.9ex>[r]^-{d_P} \ar@<-.9ex>[r]_-{c_P} &{B} \ar[l]|-{u_P}}
\end{equation} 
where:
\begin{itemize}
\item an arrow $a\xrightarrow{g}b\in P_1(\mathbb{X},\mathbb{Y})$ with domain $d_P(g)=a$ and codomain $c_P(g)=b$ is a pair $g=(x,y)$ with $y\in\mathbb{Y}(a,b)$ and $x\in\mathbb{X}(b,b);$
\item $u_P(b)=(1_b,1_b);$
\item the composition of $(b\xrightarrow{x}b,a\xrightarrow{y}b)$ followed by $(c\xrightarrow{x'}c,b\xrightarrow{y'}c)$ is given by
\begin{equation*}
(x',y')\cdot(x,y)=m_P\big((x,y),(x',y')\big)\coloneqq \big( x'\cdot \alpha(y')(x), y'\cdot y\big).
\end{equation*}
\end{itemize}
\end{definition}
Clearly, when $B=1$ this reduces to the ordinary semidirect product of monoids. 

Observe also that if $\mathbb{Y}$ is totally disconnected and the action $\alpha\colon\mathbb{Y}\rightarrow\E(\mathbb{X})$ is trivial (in the sense of Example \ref{ex:actions}(1)), then the semidirect product $\mathbb{X}\rtimes_{\alpha}\mathbb{Y}$ coincides with the binary product $\mathbb{X}\times\mathbb{Y}$ in $\mathbf{Cat}_B,$ but if both $\mathbb{X}$ and $\mathbb{Y}$ are not totally disconnected then the set $P_1(\mathbb{X},\mathbb{Y})$ is a proper subset of the set of arrows of $\mathbb{X}\times\mathbb{Y}.$

Now, a pair of morphisms $\xymatrix{{\mathbb{X}} \ar@<.5ex>[r]^-F &{\mathbb{Y}} \ar@<.5ex>[l]^-S}$ in $\mathbf{Cat}_B$ such that $FS=id_{\mathbb{Y}}$ (also referred to as a \emph{point} on $\mathbb{Y}$ in $\mathbf{Cat}_B$) is called a \emph{Schreier point} if for every arrow $x\in\mathbb{X}(a,b)$ there exists a unique $k\in\mathbb{X}(b,b)$ such that $F(k)=1_b$ and $x=k\cdot SF(x).$ (This notion was first considered in \cite{mal'tsev-reflection}, under the name of \emph{functors with cofibrant splittings}.)

For a fixed $\Y\in\mathbf{Cat}_B,$ we shall denote by $SPt_{\Y}$ the category of all Schreier points in $\mathbf{Cat}_B,$ whose morphisms $\big(\xymatrix{{\mathbb{X}} \ar@<.5ex>[r]^-F &{\mathbb{Y}} \ar@<.5ex>[l]^-S}\big)\rightarrow \big(\xymatrix{{\mathbb{X}'} \ar@<.5ex>[r]^-{F'} &{\mathbb{Y}} \ar@<.5ex>[l]^-{S'}}\big)$ are the morphisms $G\colon\X\rightarrow\X'$ in $\mathbf{Cat}_B$ such that the diagram
\begin{equation}
\label{eqn:mor_points}
\begin{aligned}
\xymatrixcolsep{1.5pc}
\xymatrix{
{\X} \ar@<-.5ex>[rd]_-{F} \ar[rr]^-G &\ &{\X'} \ar@<-.5ex>[ld]_-{F'}  \\
&{\Y} \ar@<-.5ex>[lu]_-{S} \ar@<-.5ex>[ru]_-{S'} &\
}
\end{aligned}
\end{equation}
is commutative (in the sense that both $F'G=F$ and $GS=S'$).

We shall make use of the following property: Given a commutative diagram
\[ 
\xymatrix{
{\mathbb{K}(F)} \ar[d]_-{F_1} \ar@{>->}[r] &{\X}\ar[d]_-{F_2} \ar@<.5ex>[r]^-{F} \ar@<-.5ex>[r]_-{S} &{\Y} \ar[d]^-{F_3} \\ 
{\mathbb{K}(F')} \ar@{>->}[r] &{\X'} \ar@<.5ex>[r]^-{F'} \ar@<-.5ex>[r]_-{S'} &{\Y'}
}
\]
in $\mathbf{Cat}_B$ in which $(F,S)$ and $(F',S')$ are Schreier points and $F_1$ and $F_3$ are isomorphisms, the morphism $F_2$ is also an isomorphism. With a slight abuse of language (since $\mathbf{Cat}_B$ is not pointed), we express this fact by saying that the Split Short Five Lemma for Schreier points holds in $\mathbf{Cat}_B.$

Indeed, one can prove that the category $\mathbf{Cat}_B$ is $S$-protomodular with respect to the class of Schreier points, in the sense of \cite{mal'tsev-reflection}, Definition 4.3 (which is just the non-pointed version of \cite{S-protomodular}, Definition 3.1, where this notion was introduced): see \cite{mal'tsev-reflection}, Theorem 3.1 and Propositions 3.4, 3.6, for a proof. The fact that the Split Short Five Lemma for Schreier points holds in $\mathbf{Cat}_B$ is then a consequence of \cite{mal'tsev-reflection}, Theorem 4.1(2) (which is, again, the non-pointed version of \cite{S-protomodular}, Proposition 3.2$(iii)$).

Schreier points admit an equivalent description in terms of $\mathbf{Mon}$-valued functors:
\begin{proposition}[cf.~\cite{mal'tsev-reflection}, Lemma 3.1]
\label{prop:equivalence_schreier_points_functor}
The category $SPt_{\Y}$ of Schreier points on $\Y$ in $\mathbf{Cat}_B$ is equivalent to the functor category $\mathrm{Fun}(\Y,\mathbf{Mon}).$
\end{proposition}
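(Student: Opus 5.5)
I will construct two functors $\Phi\colon SPt_{\Y}\to\mathrm{Fun}(\Y,\mathbf{Mon})$ and $\Psi\colon\mathrm{Fun}(\Y,\mathbf{Mon})\to SPt_{\Y}$ and show that they are quasi-inverse. This is a Grothendieck-construction-type argument.

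Given a Schreier point $(F,S)$ on $\Y$, put $\Phi(F,S)(b)\coloneqq\mathbb{K}(F)(b,b)=\{k\in\X(b,b):F(k)=1_b\}$. This is a submonoid of $\X(b,b)$. For $y\in\Y(a,b)$ and $k\in\mathbb{K}(F)(a,a)$, the arrow $S(y)\cdot k\colon a\to b$ has image $y$ under $F$. By the Schreier condition there is therefore a unique $y_*(k)\in\mathbb{K}(F)(b,b)$ with $S(y)\cdot k=y_*(k)\cdot S(y)$.

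\begin{itemize}
\item Uniqueness gives that $y_*$ is a monoid homomorphism. For instance, $S(y)kk'=y_*(k)S(y)k'=y_*(k)y_*(k')S(y)$ and $y_*(1_a)=1_b$.
\item Uniqueness also gives functoriality: $(y'y)_*=y'_*y_*$ and $(1_b)_*=id$.
\item A morphism $G$ of Schreier points restricts to kernels, because $F'G=F$. Since $GS=S'$, applying $G$ to the defining equation and using uniqueness shows that the components commute with the $y_*$. This yields a natural transformation.
\end{itemize}

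Conversely, given $M\colon\Y\to\mathbf{Mon}$, define $\Psi(M)$ as a Grothendieck-type construction modelled on the semidirect product $\mathbb{X}\rtimes_\alpha\mathbb{Y}$ above.

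\begin{itemize}
\item An arrow $a\to b$ is a pair $(m,y)$ with $y\in\Y(a,b)$ and $m\in M(b)$.
\item Composition is $(m',y')\cdot(m,y)=(m'\cdot M(y')(m),\,y'y)$, and units are $(1,1_b)$.
\item The point is given by $F(m,y)=y$ and $S(y)=(1,y)$.
\end{itemize}

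Associativity and unitality reduce to the fact that the $M(y)$ are monoid homomorphisms and $M$ is a functor. The Schreier condition holds because $(m,y)=(m,1_b)\cdot(1,y)$, and the kernel arrows at $b$ are exactly the pairs $(m,1_b)$, which gives uniqueness. A natural transformation $\tau$ induces $(m,y)\mapsto(\tau_b(m),y)$.

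Next I check the two composites.
\begin{itemize}
\item $\Phi\Psi(M)\cong M$ directly: the kernel at $b$ is $\{(m,1_b)\}\cong M(b)$. Moreover $(1,y)(m,1_a)=(M(y)(m),y)=(M(y)(m),1_b)(1,y)$, so $y_*=M(y)$.
\item For $\Psi\Phi(F,S)\cong(F,S)$, define $\theta\colon\Psi\Phi(F,S)\to\X$ by $(k,y)\mapsto k\cdot S(y)$. It is a bijection on arrows by existence and uniqueness in the Schreier condition. It is a functor because $k'S(y')kS(y)=k'\,y'_*(k)\,S(y')S(y)$. It also commutes with $F$ and $S$.
\end{itemize}
Naturality of both isomorphisms is routine.

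The only delicate step is verifying the homomorphism and functoriality properties of $y_*$ purely from uniqueness in the Schreier factorization; everything else is bookkeeping. Alternatively, one could obtain that $\theta$ is invertible from the Split Short Five Lemma for Schreier points, but the direct bijection argument is simpler.
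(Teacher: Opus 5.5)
Your proof is correct and follows essentially the paper's construction: your $\Psi(M)$ is $M^+\rtimes_{\alpha_M}\Y$ with the point $(\Pi,\Sigma)$, your $y_*$ is the action $\omega_{(F,S)}$ on $\mathbb{K}(F)$, and your $\theta$ is the paper's $\beta_{(F,S)}\colon(x,y)\mapsto x\cdot S(y)$. The one difference is that the paper gets invertibility of $\beta_{(F,S)}$ from the Split Short Five Lemma for Schreier points, whereas you check bijectivity directly from the Schreier factorization, which is more elementary and self-contained.
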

This equivalence may be described in terms of semidirect products in $\mathbf{Cat}_B,$ as follows. 

Given a functor $F\colon\Y\rightarrow\mathbf{Mon},$ denote by $F^+$ (using the notation of \cite{cocat, golasinski}) the small, totally disconnected category on $B$ whose set of arrows is the disjoint union $\bigsqcup_{b\in B}F(b).$ (Of course, an arrow $z\in F(b)$ has the element $b$ both as domain and codomain, and units in $F^+$ are given by the neutral elements of the monoids $F(b).$) 

Then, for every such functor $F,$ there is an action $\alpha_F$ of $\Y$ on $F^+$ given by 
\[ \alpha_F(a\xrightarrow{y}b)\coloneqq F(y)\colon F(a)\rightarrow F(b),\]
and in the Schreier point $\xymatrix{{\mathbb{X}} \ar@<.5ex>[r]^-F &{\mathbb{Y}} \ar@<.5ex>[l]^-S}$ corresponding to $F$ in the equivalence of Proposition \ref{prop:equivalence_schreier_points_functor} one has $\X\cong F^+\rtimes_{\alpha_F}\Y.$

Conversely, if $\alpha\colon\Y\rightarrow\E(\X)$ is an action in $\mathbf{Cat}_B,$ one obtains a functor $F\colon\Y\rightarrow\mathbf{Mon}$ by setting $F(b)\coloneqq\X(b,b)$ and $F(a\xrightarrow{y}b)\coloneqq\alpha(y)\colon F(a)\rightarrow F(b).$ 

Thus, we may equivalently define an action of $\Y$ on $\X$ in $\mathbf{Cat}_B$ as a functor $F\colon\Y\rightarrow\mathbf{Mon}$ such that $F^+=\I(\X).$

We are ready to state the main result of this section.

Denote by $\mathrm{Act}_{\Y},$ again for a fixed object $\Y\in\mathbf{Cat}_B,$ the category of all pairs $(\X,\alpha)$ where $\X\in\mathbf{Cat}_B$ and $\alpha\colon\Y\rightarrow\E(\X)$ is an action, whose morphisms $(\X,\alpha)\rightarrow(\mathbb{U},\vartheta)$ are the morphisms $H\colon\X\rightarrow\mathbb{U}$ in $\mathbf{Cat}_B$ such that for all $y\in\Y(a,b)$ the square
\[
\xymatrix{
\X(a,a) \ar[d]_-H \ar[r]^-{\alpha(y)} &{\X(b,b)} \ar[d]^-H \\
\mathbb{U}(a,a) \ar[r]_-{\vartheta(y)} &{\mathbb{U}(b,b)}
}
\]
is commutative.
\begin{theorem}
\label{prop:adjunction_actions}
The functor
\begin{equation}
\label{eqn:K} \mathcal{K}\colon SPt_{\Y} \longrightarrow \mathrm{Act}_{\Y}, \ (\xymatrix{{\mathbb{X}} \ar@<.5ex>[r]^-F &{\mathbb{Y}} \ar@<.5ex>[l]^-S})\longmapsto(\mathbb{K}(F),\omega_{(F,S)}), 
\end{equation}
where the action $\omega_{(F,S)}\colon\Y\rightarrow\E\big(\mathbb{K}(F)\big)$ is given by 
\begin{equation*}
\omega_{(F,S)}(a\xrightarrow{y}b)\colon F^{-1}(1_a)\rightarrow F^{-1}(1_b), \ k\mapsto{^yk} \ \text{ s.t. } \ S(y)\cdot k={^yk}\cdot S(y),
\end{equation*}
is left adjoint to the functor
\begin{equation} 
\label{eqn:P}
\mathcal{P}\colon \mathrm{Act}_{\Y}\longrightarrow SPt_{\Y} , (\X,\alpha)\longmapsto (\xymatrix{{\mathbb{X}\rtimes_{\alpha}\mathbb{Y}} \ar@<.5ex>[r]^-{\Pi} &{\mathbb{Y}} \ar@<.5ex>[l]^-{\Sigma}}),
\end{equation}
where $\Pi$ is the projection morphism $\Pi(x,y)\coloneqq y$ and $\Sigma(a\xrightarrow{y}b)\coloneqq(1_b,y).$
\end{theorem}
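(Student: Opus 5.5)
Both functors need to be shown well defined first; after that I will exhibit a unit and counit satisfying the triangle identities. For $\mathcal{K}$: let $(F,S)$ be a Schreier point, $y\in\Y(a,b)$ and $k\in F^{-1}(1_a)$. The arrow $S(y)\cdot k\in\X(a,b)$ factors uniquely as $k'\cdot SF(S(y)\cdot k)=k'\cdot S(y)$ with $k'\in F^{-1}(1_b)$. This shows that ${^yk}$ exists and is unique. Uniqueness then gives the rest: ${^y(k_1k_2)}={^yk_1}\,{^yk_2}$, ${^y1_a}=1_b$, ${^{y'y}k}={^{y'}({^yk})}$ and ${^{1_a}k}=k$. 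Each of these follows because both sides $\kappa$ satisfy $S(\cdot)\cdot k=\kappa\cdot S(\cdot)$. Hence $\omega_{(F,S)}$ is a functor $\Y\to\E(\mathbb{K}(F))$. We also need $\mathbb{K}(F)(b,b)=F^{-1}(1_b)$, which holds because $\mathbb{K}(F)$ is totally disconnected. A morphism $G$ of Schreier points restricts to $\mathbb{K}(F)\to\mathbb{K}(F')$, and applying $G$ to $S(y)k={^yk}S(y)$ together with $GS=S'$ shows that this restriction is equivariant. For $\mathcal{P}$: associativity of $m_P$ uses that $\alpha(y')$ is a monoid map and that $\alpha$ is a functor. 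Clearly $\Pi\Sigma=1$. The Schreier property holds because $(x,y)=(x,1_b)\cdot(1_b,y)$, and $(x,1_b)$ is the unique element of $\Pi^{-1}(1_b)$ with this property. An equivariant $H$ induces $H\rtimes\Y\colon(x,y)\mapsto(H(x),y)$.

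Since $\mathbb{K}(\Pi)$ has arrows $(x,1_b)$ with $x\in\X(b,b)$, it identifies with $\I(\X)$. The action is computed from $(1_b,y)(x,1_a)=(\alpha(y)(x),y)=(\alpha(y)(x),1_b)(1_b,y)$, so $\mathcal{K}\mathcal{P}(\X,\alpha)=(\I(\X),\alpha)$. For the \textbf{counit} I take $\varepsilon\colon(\I(\X),\alpha)\to(\X,\alpha)$ to be the inclusion subcategory $\I(\X)\hookrightarrow\X$. This is a morphism in $\mathbf{Cat}_B$, and it is trivially equivariant. For the \textbf{unit} at a Schreier point $(F,S)$, I set $\eta\colon\X\to\mathbb{K}(F)\rtimes_{\omega}\Y$, $x\mapsto(k_x,F(x))$, where $x=k_x\cdot SF(x)$. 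By the Split Short Five Lemma for Schreier points, $\eta$ is in fact an isomorphism: it is the identity on kernels and on $\Y$. This matches Proposition~\ref{prop:equivalence_schreier_points_functor}.

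The main obstacle is checking that $\eta$ is a functor. Take $x\colon a\to b$ and $x'\colon b\to c$. Then
\[x'x=k_{x'}S(F x')k_xS(Fx)=k_{x'}\,{^{Fx'}k_x}\,S(Fx')S(Fx),\]
so $k_{x'x}=k_{x'}\cdot\omega(Fx')(k_x)$, which is exactly the semidirect composition. Identities are preserved, and one checks directly that $\Pi\eta=F$ and $\eta S=\Sigma$. Naturality of $\eta$ and $\varepsilon$ follows by uniqueness of the factorizations. For the triangle identities, $\mathcal{K}\eta$ is the identity $\mathbb{K}(F)\to\I(\mathbb{K}(F))=\mathbb{K}(F)$, and composing with $\varepsilon_{\mathcal{K}(F,S)}$, which is the identity, gives $1$. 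Finally, $\mathcal{P}\varepsilon\circ\eta_{\mathcal{P}}$ sends $(x,y)$ to $((x,1_b),y)$ and then back to $(x,y)$.

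Alternatively, one can bypass the triangle identities by a hom-set bijection. The bijection sends $G\colon\mathcal{K}(F,S)\to(\mathbb{U},\vartheta)$ to $(G\rtimes\Y)\circ\eta$, and its inverse sends $\Phi\colon\X\to\mathbb{U}\rtimes\Y$ to its restriction to $\mathbb{K}(F)$, which lands in $\I(\mathbb{U})\times\{1\}$. Injectivity uses $x=k_xS(Fx)$ together with $\Phi S=\Sigma$.
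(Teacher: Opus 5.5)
Your proposal is correct and takes essentially the paper's route. You use the same unit $x\mapsto(k_x,F(x))$, which is the inverse of the paper's $\beta_{(F,S)}\colon(k,y)\mapsto k\cdot S(y)$, the same counit (the inclusion $\I(\X)\hookrightarrow\X$), and your alternative hom-set bijection is exactly the paper's; the only minor difference is that you check directly that $\eta$ is a functor and verify the triangle identities, whereas the paper obtains $\eta$ by inverting $\beta$ via the Split Short Five Lemma and states the adjunction through the hom-set bijection.
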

\begin{proof}
The functor \eqref{eqn:K} operates on morphisms by mapping a morphism \eqref{eqn:mor_points} of points on $\Y$ to the restriction $\mathcal{K}(G)\colon(\mathbb{K}(F),\omega_{(F,S)})\rightarrow(\mathbb{K}(F'),\omega_{(F',S')})$ given by $\mathcal{K}(G)\colon\bigsqcup_{b\in B}F^{-1}(1_b)\rightarrow\bigsqcup_{b\in B}F'^{-1}(1_b), \ k\mapsto G(k).$ It is well defined, by $F'G=F,$ and it preserves the action by $S'=G S$ and the uniqueness of $^{y}G(k)$ in the equality $S'(y)G(k)={^{y}G(k)}S'(y),$ for any $y\in\Y(a,b).$

Next, observe that the functor \eqref{eqn:P} is well defined on the objects, meaning that $\xymatrix{{\mathbb{X}\rtimes_{\alpha}\mathbb{Y}} \ar@<.5ex>[r]^-{\Pi} &{\mathbb{Y}} \ar@<.5ex>[l]^-{\Sigma}}$ is indeed a Schreier point in $\mathbf{Cat}_B,$ by $(b\xrightarrow{x}b,a\xrightarrow{y}b)=(x,1_b)\cdot(1_b,y).$ Moreover, we have $\mathbb{K}(\Pi)\cong\I(\X),$ in the notation of \eqref{eqn:cat_I}. On the morphisms, $\mathcal{P}$ is defined by mapping an arrow $H\colon(\X,\alpha)\rightarrow(\mathbb{U},\vartheta)$ in $\mathrm{Act}_{\Y}$ to the morphism 
\[ \xymatrixcolsep{1.5pc}
\xymatrix{
{\X\rtimes_{\alpha}\Y} \ar@<-.5ex>[rd]_-{\Pi} \ar[rr]^-{\mathcal{P}(H)} &\ &{\mathbb{U}\rtimes_{\vartheta}\Y} \ar@<-.5ex>[ld]_-{\Pi'}  \\
&{\Y} \ar@<-.5ex>[lu]_-{\Sigma} \ar@<-.5ex>[ru]_-{\Sigma'} &\
} \]
given by $\mathcal{P}(H)\colon P_1(\X,\Y)\rightarrow P_1(\mathbb{U},\Y), \ (x,y)\mapsto(H(x),y).$

The composite functor $\mathcal{P}\mathcal{K}\colon SPt_{\Y}\longrightarrow SPt_{\Y}$ then sends a Schreier point $\xymatrix{{\mathbb{X}} \ar@<.5ex>[r]^-F &{\mathbb{Y}} \ar@<.5ex>[l]^-S}$ to the Schreier point $\xymatrix{{\mathbb{K}(F)\rtimes_{\omega_{(F,S)}}\Y} \ar@<.5ex>[r]^-{\Pi} &{\mathbb{Y},} \ar@<.5ex>[l]^-{\Sigma}}$ yielding a morphism 
\[
\xymatrix{
{\mathbb{K}(F)} \ar@{>->}[r] \ar@{=}[d] &{\mathbb{K}(F)\rtimes_{\omega_{(F,S)}}\Y} \ar[d]^-{\beta_{(F,S)}} \ar@<.5ex>[r]^-{\Pi} &{\mathbb{Y}} \ar@{=}[d] \ar@<.5ex>[l]^-{\Sigma} \\
{\mathbb{K}(F)} \ar@{>->}[r] &{\X} \ar@<.5ex>[r]^-{F} &{\mathbb{Y}, \ar@<.5ex>[l]^-{S}}
}
\]
$\beta_{(F,S)}\colon P_1(\mathbb{K}(F),\Y)\rightarrow X_1,$ $(b\xrightarrow{x}b,a\xrightarrow{y}b)\mapsto m_X(S(y),x)=x\cdot S(y).$ 

By the Split Short Five Lemma for Schreier points in $\mathbf{Cat}_B,$ $\beta_{(F,S)}$ is an isomorphism, and we denote its inverse by 
\begin{equation}
\label{eqn:unit_1}\eta_{(F,S)}\coloneqq\beta_{(F,S)}^{-1}\colon (\xymatrix{{\mathbb{X}} \ar@<.5ex>[r]^-F &{\mathbb{Y}} \ar@<.5ex>[l]^-S}) \xlongrightarrow{\sim}\mathcal{P}\mathcal{K}(\xymatrix{{\mathbb{X}} \ar@<.5ex>[r]^-F &{\mathbb{Y}} \ar@<.5ex>[l]^-S}).
\end{equation}

Similarly, the composition $\mathcal{K}\mathcal{P}\colon\mathrm{Act}_{\Y}\longrightarrow\mathrm{Act}_{\Y}$ maps $\big(\X,\alpha\colon\Y\rightarrow\E(\X)\big)$ to $\big(\I(\X),\alpha\colon\Y\rightarrow\E(\I(\X))=\E(\X)\big),$ i.e. to the same action of $\Y,$ but seen as an action on the totally disconnected subcategory $\I(\X)\subseteq\X.$ Denote by 
\begin{equation}
\label{eqn:counit_1}
\xymatrix{ {\varepsilon_{(\X,\alpha)}\colon\mathcal{K}\mathcal{P}(\X,\alpha)=(\I(\X),\alpha)} \ar@{>->}[r] &{(\X,\alpha)}}
\end{equation}
the inclusion morphism. 

Then the bijection defined by mapping a morphism 
\[ H\colon \mathcal{K}(\xymatrix{{\mathbb{X}} \ar@<.5ex>[r]^-F &{\mathbb{Y}} \ar@<.5ex>[l]^-S})=(\mathbb{K}(F),\omega_{(F,S)})\rightarrow(\mathbb{U},\vartheta)\]
in $\mathrm{Act}_{\Y}$ to the morphism of Schreier points 
\[ \xymatrixcolsep{1.5pc}
\xymatrix{
{\X} \ar@<-.5ex>[rd]_-{F} \ar[rr]^-{\Phi(H)} &\ &{\mathbb{U}\rtimes_{\vartheta}\Y} \ar@<-.5ex>[ld]_-{\Pi}  \\
&{\Y,} \ar@<-.5ex>[lu]_-{S} \ar@<-.5ex>[ru]_-{\Sigma} &\
} \]
where $\Phi(H)\colon X_1\rightarrow P_1(U,Y),$ $\big(x=k\cdot SF(x)\big)\mapsto\big(H(k),F(x)\big),$ and conversely by sending a morphism of Schreier points
\[ \xymatrixcolsep{1.5pc}
\xymatrix{
{\X} \ar@<-.5ex>[rd]_-{F} \ar[rr]^-{G} &\ &{\mathbb{U}\rtimes_{\vartheta}\Y} \ar@<-.5ex>[ld]_-{\Pi}  \\
&{\Y} \ar@<-.5ex>[lu]_-{S} \ar@<-.5ex>[ru]_-{\Sigma} &\
} \]
given by $G\colon X_1\rightarrow P_1(U,Y),$ $x\mapsto G(x)=\big(\widetilde{G}(x),F(x)\big)$ for some $\widetilde{G}\colon\X\rightarrow\mathbb{U},$ to the restriction
\[ \widetilde{G}\colon (\mathbb{K}(F),\omega_{(F,S)})\rightarrow (\mathbb{U},\vartheta), \ (x\in F^{-1}(1_b))\mapsto \widetilde{G}(x), \]
determines an adjunction $\mathcal{K}\dashv\mathcal{P}$ whose unit and counit are given by the morphisms $\eta_{(F,S)}$ and $\varepsilon_{(\X,\alpha)},$ respectively.
\end{proof}
Observe that, the left adjoint $\mathcal{K}$ being fully faithful (the unit \eqref{eqn:unit_1} is an isomorphism), we have:
\begin{corollary}
The category $SPt_{\Y}$ is equivalent to the coreflective subcategory $(\mathrm{Act}_{\Y})_{t.d.}\subseteq\mathrm{Act}_{\Y}$ whose objects are the pairs $(\X,\alpha\colon\Y\rightarrow\E(\X)\big)$ with $\X$ totally disconnected.
\end{corollary}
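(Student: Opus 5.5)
The plan is to use the adjunction $\mathcal{K}\dashv\mathcal{P}$ of Theorem \ref{prop:adjunction_actions} and its explicit unit and counit. Because the unit $\eta$ of \eqref{eqn:unit_1} is a natural isomorphism, the left adjoint $\mathcal{K}$ is fully faithful. So $SPt_{\Y}$ is equivalent to the essential image of $\mathcal{K}$, which is a coreflective subcategory of $\mathrm{Act}_{\Y}$; the coreflector is the comonad $\mathcal{K}\mathcal{P}$, with counit $\varepsilon$. It then suffices to show that this essential image is exactly the full subcategory $(\mathrm{Act}_{\Y})_{t.d.}$ of pairs $(\X,\alpha)$ with $\X$ totally disconnected.

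First, every object in the image is totally disconnected. By construction $\mathbb{K}(F)$ has arrow set $\bigsqcup_{b\in B}F^{-1}(1_b)$. An arrow $k$ with $F(k)=1_b$ satisfies $d(k)=d(F(k))=b=c(k)$, since $F$ is the identity on objects. Hence $\mathbb{K}(F)$ is totally disconnected, and total disconnectedness is clearly invariant under isomorphism in $\mathbf{Cat}_B$.

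Conversely, let $(\X,\alpha)$ have $\X$ totally disconnected. Then $\I(\X)=\X$, so by \eqref{eqn:counit_1} the counit $\varepsilon_{(\X,\alpha)}\colon(\I(\X),\alpha)\to(\X,\alpha)$ is the identity. In particular $(\X,\alpha)\cong\mathcal{K}\mathcal{P}(\X,\alpha)$ lies in the image of $\mathcal{K}$. The essential image is therefore precisely $(\mathrm{Act}_{\Y})_{t.d.}$. Since $\mathcal{K}$ is fully faithful, its corestriction $SPt_{\Y}\to(\mathrm{Act}_{\Y})_{t.d.}$ is fully faithful and essentially surjective, hence an equivalence; its quasi-inverse is the restriction of $\mathcal{P}$.

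For coreflectivity, $\mathcal{K}\mathcal{P}(\X,\alpha)=(\I(\X),\alpha)$ is always totally disconnected. Together with the triangle identities, this makes $\varepsilon_{(\X,\alpha)}$ a coreflection of $(\X,\alpha)$ into $(\mathrm{Act}_{\Y})_{t.d.}$. Concretely, any morphism $H\colon(\mathbb{Z},\gamma)\to(\X,\alpha)$ with $\mathbb{Z}$ totally disconnected sends each arrow $b\to b$ into $\X(b,b)$. It therefore factors uniquely through the inclusion $\I(\X)\subseteq\X$, and the factorization is compatible with the actions because these agree on $\X(b,b)$.

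No step is a real obstacle; the only care needed is to use the full-faithfulness of a left adjoint with invertible unit, and to identify its essential image correctly.
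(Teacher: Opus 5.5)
Your proof is correct and follows the same route as the paper. The paper argues that the unit $\eta$ is invertible, so $\mathcal{K}$ is fully faithful, and that the counit $\varepsilon_{(\X,\alpha)}$ is invertible when $\X$ is totally disconnected. You additionally make explicit two points the paper leaves tacit: that every $\mathbb{K}(F)$ is totally disconnected, so the essential image is exactly $(\mathrm{Act}_{\Y})_{t.d.}$, and a direct check that $\varepsilon_{(\X,\alpha)}$ is the coreflection.
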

Indeed, if $\X$ is totally disconnected, then the counit \eqref{eqn:counit_1} is also, trivially, an isomorphism.
%----------------------------------------------------------------------------------------------------------------------------
\section{Crossed semimodules and crossed modules in $\mathbf{Cat}_B$}
\label{sec:crossed}
Armed with Definition \ref{def:action}, we can transplant in $\mathbf{Cat}_B$ the notion of crossed semimodule, introduced for monoids in \cite{JS} (and later studied in \cite{P-semi}) as a generalization of the ordinary notion of crossed modules.
\begin{definition}
\label{def:crossed_semimodule}
A \emph{crossed semimodule} in $\mathbf{Cat}_B,$ for an arbitrary set $B,$ is a pair $(\X\xrightarrow{\delta}\Y,\alpha)$ where $\X,\Y\in\mathbf{Cat}_B,$ $\delta$ is a morphism in $\mathbf{Cat}_B$ and $\alpha\colon\Y\rightarrow\E(\X)$ is an action, satisfying the following axioms for all $a,b\in B$:
\begin{enumerate}
\item $\delta\big(\alpha(y)(x)\big)\cdot y=y\cdot\delta(x)$ (composition in $\Y$) for all $y\in\Y(a,b)$ and $x\in\X(a,a);$ 
\item $\alpha\big(\delta(w)(x)\big)\cdot w=w\cdot x$ (composition in $\X$) for all $w\in\X(a,b)$ and $x\in\X(a,a).$
\end{enumerate}
A morphism 
\[ (\X\xlongrightarrow{\delta}\Y,\alpha)\longrightarrow(\X'\xlongrightarrow{\delta'}\Y',\alpha')\]
of crossed semimodules is a pair $(\varphi\colon\X\rightarrow\X',\psi\colon\Y\rightarrow\Y')$ of morphisms in $\mathbf{Cat}_B$ such that the square
\begin{equation}
\label{eqn:mor_semimod}
\begin{aligned}
\xymatrix{
{\X} \ar[d]_-{\varphi} \ar[r]^-{\delta} &{\Y} \ar[d]^-{\psi} \\
{\X'} \ar[r]_-{\delta'} &{\Y'}
}
\end{aligned}
\end{equation}
commutes, and satisfying for every $y\in\Y(a,b)$ and $x\in\X(a,a)$ (for any $a,b\in B$) the equality
\[ \varphi\big(\alpha(y)(x)\big)=\alpha'\big(\psi(y)\big)\big(\varphi(x)\big). \]
\end{definition}
We shall denote the category of crossed semimodules in $\mathbf{Cat}_B$ by $\mathbf{Xsmod}_B.$

When $B=1$ (in which case $\mathbf{Xsmod}_1$ is denoted simply by $\mathbf{Xsmod}$), it is proven in \cite{P-semi} that one has an equivalence $\mathbf{Xsmod}\cong\mathrm{SCat}(\mathbf{Mon}),$ where the latter denotes the category of Schreier internal categories in $\mathbf{Mon},$ i.e. the full subcategory of $\mathrm{Cat}(\mathbf{Mon})$ of all internal categories
\begin{equation*}
\xymatrix{{X_2\cong{X_1\times_{X_0}X_1}} \ar[r]^-{m} &{X_1} \ar@<.9ex>[r]^-{d} \ar@<-.9ex>[r]_-{c} &{X_0} \ar[l]|{u} }
\end{equation*}
such that $(d,u)$ is a Schreier point. 

Of course, since the notion of Schreier point is available in $\mathbf{Cat}_B$ for any set $B,$ we can more generally consider the category $\mathrm{SCat}(\mathbf{Cat}_B)$ of Schreier internal categories in $\mathbf{Cat}_B.$ Unsurprisingly, as was for Theorem \ref{prop:adjunction_actions}, the above equivalence is an instance of a more general adjunction:
\begin{theorem}
\label{prop:adjunction_Xsmod_SCat}
There is an adjunction $\mathcal{L}\dashv\mathcal{S}$ between the functor
\begin{equation}\label{eqn:L} \mathcal{L}\colon\mathrm{SCat}(\mathbf{Cat}_B)\longrightarrow\mathbf{Xsmod}_B, \end{equation}
defined by mapping a Schreier internal category
\begin{equation}\label{eqn:Sch_cat} \xymatrix{{\X_2\cong{\X_1\times_{\X_0}\X_1}} \ar[r]^-{M} &{\X_1} \ar@<.9ex>[r]^-{D} \ar@<-.9ex>[r]_-{C} &{\X_0} \ar[l]|{U} } \end{equation}
to the restriction $\delta\coloneqq C_|\colon \mathbb{K}(D)\rightarrow\X_0,$ together with the induced action $\omega_{(D,U)}$ such that $\big(\mathbb{K}(D),\omega_{(D,U)}\big)=\mathcal{K}(D,U)$ as in \eqref{eqn:K},
and the functor
\begin{equation}\label{eqn:S} \mathcal{S}\colon \mathbf{Xsmod}_B\longrightarrow\mathrm{SCat}(\mathbf{Cat}_B)\end{equation}
which sends a crossed semimodule $(\X\xlongrightarrow{\delta}\Y,\alpha)$ to 
\begin{equation}\label{eqn:semi-cat}  \xymatrix{{C(\X,\Y)} \ar[r]^-{\Xi} &{\X\rtimes_{\alpha}\Y} \ar@<.9ex>[r]^-{\Pi} \ar@<-.9ex>[r]_-{\Lambda} &{\Y,} \ar[l]|-{\Sigma} } \end{equation}
where $(\xymatrix{{\mathbb{X}\rtimes_{\alpha}\mathbb{Y}} \ar@<.5ex>[r]^-{\Pi} &{\mathbb{Y}} \ar@<.5ex>[l]^-{\Sigma}})=\mathcal{P}(\X,\alpha)$ as in \eqref{eqn:P}, $\Lambda$ is defined on $P_1(\X,\Y)$ (as in \eqref{eqn:semidirect}) by $\Lambda(b\xrightarrow{x}b,a\xrightarrow{y}b)\coloneqq\delta(x)\cdot y,$ and the composition $\Xi$ is given on the pullback
\begin{equation}
\label{eqn:C(X,Y)}
\begin{aligned}
\xymatrix{
{C(\X,\Y)} \pullback \ar[r] \ar[d] &{\X\rtimes_{\alpha}\Y} \ar[d]^-{\Pi} \\
{\X\rtimes_{\alpha}\Y} \ar[r]_-{\Lambda} &{\Y}
}
\end{aligned}
\end{equation}
(so that the set of arrows of $C(\X,\Y)$ can be realized as the set of triples $(z,x,y)$ with $y\in\Y(a,b)$ and $x,z\in\X(b,b)$)  by 
\[ \Xi(z,x,y)\coloneqq (z\cdot x,y).\]
\end{theorem}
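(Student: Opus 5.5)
The plan is to build the adjunction $\mathcal{L}\dashv\mathcal{S}$ on top of $\mathcal{K}\dashv\mathcal{P}$ of Theorem \ref{prop:adjunction_actions}, since $\mathcal{L}$ restricts to $\mathcal{K}$ on the underlying Schreier point $(D,U)$ and $\mathcal{S}$ extends $\mathcal{P}$.

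\emph{Step 1: $\mathcal{L}$ is well defined.} For a Schreier internal category \eqref{eqn:Sch_cat}, set $\delta=C_|\colon\mathbb{K}(D)\rightarrow\X_0$. To check axiom (1), take $k\in\mathbb{K}(D)$ over $a$ and $y\in\X_0(a,b)$. We have $U(y)\cdot k={^yk}\cdot U(y)$ in $\X_1$. Applying the functor $C$ and using $CU=1$ gives $y\cdot\delta(k)=\delta({^yk})\cdot y$.

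For axiom (2), take $w\in\mathbb{K}(D)(a,b)$, so $w\colon a\to b$ with $D(w)=1$. The point is to show $w\cdot x={^{\delta(w)}x}\cdot w$ in $\mathbb{K}(D)$. I would use the internal composition $M$ and the interchange law. The pairs $(x,w)$ and $(U\delta(x)$-type elements$)$ are composable in the internal category, and the standard argument ``$k\cdot k'=M(k,k')$ and $M$ is a functor'' (the Eckmann--Hilton style computation) gives $w\cdot x = M(x,1)\cdot M(1,w)$ and $= M(1,w)\cdot M(x,1)$ up to conjugation by $U\delta$. Writing both as Schreier factorizations $k\cdot U(\cdot)$ and invoking uniqueness yields the identity. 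The action is that of $\mathcal{K}(D,U)$. Morphisms of internal categories restrict to kernels, giving pairs $(\varphi,\psi)$; preservation of the action follows as in the proof of Theorem \ref{prop:adjunction_actions}.

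\emph{Step 2: $\mathcal{S}$ is well defined.} We must show:
\begin{itemize}
\item $\Lambda$ is a functor; this uses axiom (1) to rewrite $\delta(x'\cdot\alpha(y')(x))\cdot y'\cdot y=\delta(x')\cdot y'\cdot\delta(x)\cdot y$;
\item $\Lambda\Sigma=1$ and $\Pi\Sigma=1$;
\item $\Xi$ is a functor on $C(\X,\Y)$; this uses axiom (2) to commute $\alpha(\delta(\cdot))$ past elements of $\X$ when composing triples;
\item associativity and unit laws, which are immediate from those in $\X$.
\end{itemize}
The point $(\Pi,\Sigma)$ is Schreier by Theorem \ref{prop:adjunction_actions}. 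One also notes that the pullback is realized by triples $(z,x,y)$ exactly as stated.

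\emph{Step 3: unit, counit and triangle identities.} The counit is $\varepsilon\colon\mathcal{L}\mathcal{S}(\delta,\alpha)=(\I(\X)\xrightarrow{\delta_|}\Y,\alpha)\rightarrow(\delta,\alpha)$, given by the pair (inclusion, $1_\Y$). The unit is built from $\eta_{(D,U)}$ of \eqref{eqn:unit_1} on $\X_1$ together with the identity on $\X_0$. Since $\beta_{(D,U)}$ is an isomorphism, I must check that it commutes with $C$ and $\Lambda$, that is, $C(k\cdot U(y))=\delta(k)\cdot y$, and with the multiplications. Then I would check that it induces an isomorphism of internal categories; this is automatic, since a morphism of reflexive graphs between internal categories preserves composition when composition is determined by the graph. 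The triangle identities reduce to those of $\mathcal{K}\dashv\mathcal{P}$ on the first component and to identities on $\Y$.

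I expect the main obstacle to be the verification of axiom (2) for $\mathcal{L}$. It requires a careful use of the internal composition $M$, which is a functor on the pullback, together with Schreier uniqueness, in a category where $w$ need not be an endomorphism.
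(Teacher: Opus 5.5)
Your plan is essentially the paper's proof. It uses the same checks of the two axioms for $\mathcal{L}$: apply $C$ to $U(y)\cdot k={^yk}\cdot U(y)$, and use an interchange-law argument with $M$. It uses axioms (1) and (2) in the same way to make $\Lambda$ and $\Xi$ functors, and it has the same unit $\beta_{(D,U)}^{-1}$ (invertible by the Split Short Five Lemma) and the same counit $\I(\X)\hookrightarrow\X$; you merely verify triangle identities where the paper writes down the hom-set bijection. Two things to tighten: what you call the ``automatic'' compatibility of the unit with composition is exactly the paper's formula \eqref{eqn:star}, $M(z,v)=w'\cdot w\cdot UD(z)$ for $z=w\cdot UD(z)$, $v=w'\cdot UD(v)$, obtained from the factorization $(z,v)=(1_b,w')\circ(w,UC(w))\circ(UD(z),UD(z))$ in $\X_2$, which you should state and prove once (it also yields axiom (2) directly), and arrows of $\mathbb{K}(D)$ are always endomorphisms because $D$ is the identity on objects, so your worry that $w$ need not be an endomorphism never arises.
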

\begin{proof}
\emph{Step} $1$: The functor \eqref{eqn:L} is well defined.

Consider a Schreier internal category \eqref{eqn:Sch_cat} in $\mathbf{Cat}_B,$ and define $\delta\colon \mathbb{K}(D)\rightarrow \X_0$ as the restriction of $C$ to $\mathbb{K}(D).$ By definition of $\mathcal{K}$ \eqref{eqn:K}, the $\X_0$-action $\omega_{(D,U)}$ on $\mathbb{K}(D)$ is given by 
\[ \omega_{(D,U)}\colon D^{-1}(1_a)\rightarrow D^{-1}(1_b), \ k\mapsto {^yk} \ s.t. \ U(y)\cdot k={^yk}\cdot U(y)\]
for every arrow $y\in\X_0(a,b)$ (composition in $\X_1$), and we must show that the pair $(\mathbb{K}(D)\xrightarrow{\delta} \X_0,\omega_{(D,U)})$ satisfies the two axioms of Definition \ref{def:crossed_semimodule}. 
\begin{enumerate}
\item $\delta\big(\omega_{(D,U)}(y)(k)\big)\cdot y=y\cdot \delta(k)$ \emph{(composition in $\X_0$) for every} $y\in\X_0(a,b)$ \emph{and} $k\in D^{-1}(1_a).$

Indeed, we have
\begin{equation*}
\begin{split}
\delta\big(\omega_{(D,U)}(y)(k)\big)\cdot y&=C({^yk})\cdot y\\
&=C\big({^yk}\cdot U(y)\big) \\
&=C\big(U(y)\cdot k\big)\\
&=y\cdot C(k)\\
&=y\cdot\delta(k).
\end{split}
\end{equation*}
\item $\omega_{(D,U)}\big(\delta(x)\big)(k)\cdot x=x\cdot k$ \emph{(composition in $\X_1$) for every} $x,k\in D^{-1}(1_a).$

I.e., we have to show that ${^{C(x)}k}\cdot x=x\cdot k.$ To prove this, observe that since $(D,U)$ is a Schreier point, then for every pair $(z,v)$ of arrows in $\X_1$ such that $D(v)=C(z)$ (which implies that $v,z\colon a\rightarrow b$ are parallel arrows in $\X_1,$ since $D$ and $C$ are morphisms in $\mathbf{Cat}_B$) we have with respect to the composition of $\X_1$
\[ z=w\cdot UD(z)\]
and
\begin{equation*}\begin{split}v=w'\cdot UD(v)=w'\cdot UC(z)&=w'\cdot UC(w)\cdot UCUD(z)\\&=w'\cdot UC(w)\cdot UD(z) \end{split}\end{equation*} 
for two unique arrows $w,w'\in D^{-1}(1_b).$

Then
\begin{equation}
\begin{aligned}
\begin{split}
M(z,v)&=M\big(w\cdot UD(z),w'\cdot UC(w)\cdot UD(z)\big)\\
&=M\Big((1_b,w')\circ\big(w,UC(w)\big)\circ\big(UD(z),UD(z)\big)\Big)  \\
&=M\Big(\big(UD(w'),w'\big)\circ\big(w,UC(w)\big)\circ\big(UD(z),UD(z)\big)\Big)\\
&=M\big(UD(w'),w'\big)\cdot M\big(w,UC(w)\big)\cdot M\big(UD(z),UD(z)\big)\\
&=w'\cdot w\cdot UD(z), \label{eqn:star}
\end{split}
\end{aligned}
\end{equation}
where $\circ$ denotes the composition in $\mathbb{X}_2$ (which is component-wise the composition of $\X_1$). 

It follows that, for every $x,k\in D^{-1}(1_b)$:
\begin{equation*}
\begin{split}
^{C(x)}k\cdot x&={^{C(x)}k}\cdot x\cdot \underbrace{UD(x)}_{=1_b}\\
&=M\big(x\cdot UD(x),\underbrace{{^{C(x)}k}\cdot UC(x)}_{=UC(x)\cdot k}\cdot UD(x) \big) \ (\text{by} \ \eqref{eqn:star}) \\
&=M\big(x\cdot UD(x),UC(x)\cdot k\cdot UD(x) \big)\\
&=M\big(x\cdot UD(k),UC(x)\cdot k\big) \\
&=M\Big(\big(x,UC(x)\big)\circ\big(UD(k),k\big)\Big) \\
&=M\big(x,UC(x)\big)\cdot M\big(UD(k),k\big)\\
&=x\cdot k,
\end{split}
\end{equation*}
which is the desired equality.
\end{enumerate}
Thus, $\mathcal{L}$ is well defined on the objects. On the morphisms, we define $\mathcal{L}$ by mapping an internal functor
\begin{equation}\label{eqn:mor_SCat}\begin{aligned} \xymatrix{
{\X_2} \ar[d]_-{F_2} \ar[r]^-{M} &{\X_1} \ar[d]_-{F_1} \ar@<.9ex>[r]^-{D} \ar@<-.9ex>[r]_-{C} &{\X_0} \ar[l]|-{U} \ar[d]^-{F_0} \\
{\X'_2}  \ar[r]_-{M'} &{\X'_1} \ar@<.9ex>[r]^-{D'} \ar@<-.9ex>[r]_-{C'} &{\X'_0} \ar[l]|-{U'}
} \end{aligned}\end{equation}
in $\mathrm{SCat}(\mathbf{Cat}_B)$ to the diagram
\[ \xymatrixcolsep{2.5pc}\xymatrix{
{\mathbb{K}(D)} \ar[d]_-{{F_1}_|} \ar[r]^-{\delta=C_|} &{\X_0} \ar[d]^-{F_0} \\
{\mathbb{K}(D')}  \ar[r]_-{\delta'=C'_|} &{\X'_0} 
} \]
given by the restriction of $F_1$ to $\mathbb{K}(D).$ (It is well defined and commutative by the commutativity of \eqref{eqn:mor_SCat}, and it is easy to see that it is indeed a morphism in $\mathbf{Xsmod}_B.$)\\

\emph{Step} $2$: The functor \eqref{eqn:S} is well defined.

We already know that the pair $(\Pi,\Sigma)$ in \eqref{eqn:semi-cat} is a Schreier point, and the only non-trivial point in the verification that $\Lambda(x,y)=\delta(x)\cdot y$ is a morphism  in $\mathbf{Cat}_B$ is that it preserves the composition of $\X\rtimes_{\alpha}\Y$: 
\begin{equation*}
\begin{split}
\Lambda\big((x',y')\cdot(x,y)\big)&= \Lambda\big(x'\cdot \alpha(y')(x),y'\cdot y\big)\\
&=\delta\big(x'\cdot\alpha(y')(x)\big)\cdot y'\cdot y \\
&=\delta(x')\cdot \underbrace{\delta\big(\alpha(y')(x)\big)\cdot y'}_{=y'\cdot \delta(x)}\cdot y\\
&=\delta(x')\cdot y'\cdot\delta(x)\cdot y \\
&=\Lambda(x',y')\cdot\Lambda(x,y),
\end{split}
\end{equation*}
using Definition \ref{def:crossed_semimodule}(1).

Next, the pullback \eqref{eqn:C(X,Y)} defining $C(\X,\Y)$ is constructed by taking the following level-wise pullbacks in $\mathbf{Set}$:
\[ 
\xymatrixcolsep{1.3pc}
\xymatrixrowsep{1.7pc}
\xymatrix{
&\ &{B} \ar@{.>}[ldd]|{u_C} \ar@{=}[ddd] \ar@{=}[rr] &\ &{B} \ar[ldd]|{u_P} \ar@{=}[ddd] \\
&\ &\ &\ &\ \\
&{C_1(\X,\Y)} \ar@{.>}@<.99ex>[ruu]^-{d_C} \ar@{.>}@<-.99ex>[ruu]_-{c_C} \pullback\ar[ddd]_(.45){r_1}  \ar[rr]^(.70){r_2} &\  &{P_1(\X,\Y)} \ar@<.99ex>[ruu]^-{d_P} \ar@<-.99ex>[ruu]_-{c_P} \ar[ddd]_(.60){\Pi} &\ \\
&\ &{B}  \ar[ldd]|(.25){u_P} \ar@{=}[rr] &\ &{B} \ar[ldd]|(.35){u_Y} \\
{C_2(\X,\Y)} \ar@{.>}[ruu]^-{m_C} \ar[ddd] \pullback \ar[rr] &\  &{P_2(\X,\Y)} \ar[ruu]^(.70){m_P} \ar[ddd]_(.70){\Pi_2} &\ &\ \\
&{P_1(\X,\Y)}  \ar@<.99ex>[ruu]^(.75){d_P} \ar@<-.99ex>[ruu]_(.80){c_P} \ar[rr]^(.70){\Lambda} &\  &{Y_1} \ar@<.99ex>[ruu]^(.65){d_Y} \ar@<-.99ex>[ruu]_(.65){c_Y} &\ \\
&\ &\ &\ &\ \\
{P_2(\X,\Y)} \ar[ruu]^(.60){m_P}  \ar[rr]_-{\Lambda_2} &\ &{Y_2.} \ar[ruu]_-{m_Y} &\ &\ 
}
\]
Therefore, one can describe $C_1(\X,\Y)$ as the set of triples $(z,x,y)$ with $y\in\Y(a,b)$ and $x,z\in\X(b,b)$ and projections $r_1(z,x,y)=(x,y)$ and $r_2(z,x,y)=\big(z,C(x)\cdot y\big),$ the set $C_2(\X,\Y)$ as the set of sextuples $(z,x,y,z',x',y')$ such that $y\in\Y(a,b),$ $y'\in\Y(b,c),$ $z.x\in\X(b,b),$ and $z',x'\in\X(c,c),$ and realize the composition $m_C$ by
\begin{equation*}
\begin{split}
m_C(z,x,y,z',x',y')&\coloneqq(z',x',y')\cdot(z,x,y)\\
&\coloneqq \Big( z'\cdot \alpha\big(\delta(x')\cdot y'\big)(z),x'\cdot\alpha(y')(x),y'\cdot y\Big).
\end{split}
\end{equation*}
Again, the only non-trivial point, in order to verify that \eqref{eqn:semi-cat} is indeed a (Schreier) internal category in $\mathbf{Cat}_B,$ is that the morphism $\Xi\colon C(\X,\Y)\rightarrow\X\rtimes_{\alpha}\Y$ defined by $\Xi(z,x,y)\coloneqq(z\cdot x,y)$ preserves the composition $m_C.$ To see this, we compute:
\begin{align*}
\begin{split}
\bullet \ \ \Xi\big((z',x',y')\cdot(z,x,y)\big) &=\Xi\Big( z'\cdot \alpha\big(\delta(x')\cdot y'\big)(z),x'\cdot\alpha(y')(x),y'\cdot y\Big)\\
&=\Big( z'\cdot \underbrace{\alpha\big(\delta(x')\cdot y'\big)(z)\cdot x'}_{A}\cdot\alpha(y')(x),y'\cdot y\Big),
\end{split}
\\
\begin{split}
\bullet \ \ \Xi(z',x',y')\cdot\Xi(z,x,y)&=(z'\cdot x',y')\cdot(z\cdot x,y)  \\
&=\big( z'\cdot x'\cdot \alpha(y')(z\cdot x),y'\cdot y \big) \\
&=\big( z'\cdot \underbrace{x'\cdot \alpha(y')(z)}_{B}\cdot \alpha(y')(x),y'\cdot y \big), 
\end{split}
\end{align*}
and the two expressions are equal by Definition \ref{def:crossed_semimodule}(2):
\begin{equation*}
\underbrace{\alpha\big(\delta(x')\cdot y'\big)(z)\cdot x'}_{A}=\alpha\big(\delta(x')\big)\big(\alpha(y')(z)\big)\cdot x' = \underbrace{x'\cdot \alpha(y')(z)}_{B}.
\end{equation*}
On the arrows, $\mathcal{S}$ is defined by mapping a morphism \eqref{eqn:mor_semimod} of crossed semimodules to 
\[ 
\xymatrix{
{\X\rtimes_{\alpha}\Y} \ar[d]_-{\varphi\rtimes\psi} \ar@<.99ex>[r]^-{\Pi} \ar@<-.99ex>[r]_-{\Lambda} &{\Y} \ar[l]|(.40){\Sigma} \ar[d]^-{\psi} \\ 
{\X'\rtimes_{\alpha}\Y'} \ar@<.99ex>[r]^-{\Pi'} \ar@<-.99ex>[r]_-{\Lambda'} &{\Y'.} \ar[l]|(.36){\Sigma'}
}
\]

\emph{Step} $3$: The adjunction $\mathcal{L}\dashv\mathcal{S}.$

The composite functor $\mathcal{S}\mathcal{L}$ maps a Schreier internal category
\[ \xymatrix{{\mathfrak{X}:\X_2} \ar[r]^-{M} &{\X_1} \ar@<.9ex>[r]^-{D} \ar@<-.9ex>[r]_-{C} &{\X_0} \ar[l]|{U} } \]
to
\[ \xymatrix{{C(\mathbb{K}(D),\X_0)} \ar[r]^-{\Xi} &{\mathbb{K}(D)\rtimes_{\omega_{(D,U)}}\X_0} \ar@<.9ex>[r]^-{\Pi} \ar@<-.9ex>[r]_-{\Lambda} &{\X_0.} \ar[l]|-{\Sigma} } \]

Observe that, as for the adjunction $\mathcal{K}\dashv\mathcal{P}$ of Theorem \ref{prop:adjunction_actions}, we have a morphism $\zeta_{\mathfrak{X}}$ in $\mathbf{Cat}_B$ making the following diagram commute
\[ 
\xymatrix{
{\mathbb{K}(D)} \ar@{=}[d] \ar@{>->}[r] &{\mathbb{K}(D)\rtimes_{\omega_{(D,U)}}\X_0} \ar[d]^-{\zeta_{\mathfrak{X}}} \ar@<.9ex>[r]^-{\Pi} \ar@<-.9ex>[r]_-{\Lambda} &{\X_0} \ar[l]|-{\Sigma} \ar@{=}[d] \\
{\mathbb{K}(D)} \ar@{>->}[r] &{\X_1} \ar@<.9ex>[r]^-{D} \ar@<-.9ex>[r]_-{C} &{\X_0,} \ar[l]|-{U}
}
\]
given by $\zeta_{\mathfrak{X}}\colon P_1(\mathbb{K}(D),\X_0)\rightarrow X_1=arr(\X_1),$ $(k,x)\mapsto k\cdot U(x)$ (composition in $\X_1$).

By the Split Short Five Lemma for the class of Schreier points, $\zeta_{\mathfrak{X}}$ is an isomorphism in $\mathbf{Cat}_B.$

To prove that it is actually a morphism between $\mathcal{S}\mathcal{L}(\mathfrak{X})$ and $\mathfrak{X}$ in $\mathrm{SCat}(\mathbf{Cat}_B),$ we show that it preserves the composition $\Xi.$ Indeed, for every $(z,w,x)\in C_1(\mathbb{K}(D),\X_0)$:
\[ \zeta_{\mathfrak{X}}\big(\Xi(z,w,x)\big)=\zeta_{\mathfrak{X}}(z\cdot w,x)=z\cdot w\cdot U(x)\]
and
\begin{equation*}
\begin{split}
M\big(\zeta_{\mathfrak{X}}(w,x),\zeta_{\mathfrak{X}}(z,C(w)\cdot x)\big)&=M\big(w\cdot U(x),z\cdot UC(w)\cdot U(x)\big)\\
&=M\big(w\cdot UDU(x),z\cdot UC(w)\cdot U(x)\big)\\
&=z\cdot w \cdot U(x),
\end{split}
\end{equation*}
using \eqref{eqn:star}. 

Define the inverse of $\zeta_{\mathfrak{X}}$ as
\begin{equation}
\label{eqn:unit_2}
\eta_{\mathfrak{X}}\coloneqq \zeta_{\mathfrak{X}}^{-1}\colon \mathfrak{X} \longrightarrow \mathcal{S}\mathcal{L}(\mathfrak{X}).
\end{equation}
Similarly, it is not difficult to see that $\mathcal{L}\mathcal{S}(\X\xlongrightarrow{\delta}\Y,\alpha)=(\I(\X)\xlongrightarrow{\delta_|}\Y,\alpha),$ so that we have an inclusion morphism 
\begin{equation}\label{eqn:counit_2}\xymatrix{ {\varepsilon_{(\delta,\alpha)}\colon\mathcal{L}\mathcal{S}(\X\xlongrightarrow{\delta}\Y,\alpha)}\ar@{>->}[r] &{(\X\xlongrightarrow{\delta}\Y,\alpha).}}\end{equation}

Then, the bijection defined by mapping a morphism $(\varphi,\psi)\colon\mathcal{L}(\mathfrak{X})\rightarrow(\X'\xrightarrow{\delta'}\Y',\alpha')$
\[ \xymatrixcolsep{2.5pc}\xymatrix{
{\mathbb{K}(D)} \ar[d]_-{\varphi} \ar[r]^-{\delta=C_|} &{\X_0} \ar[d]^-{\psi} \\
{\mathbb{X}'}  \ar[r]_-{\delta'} &{\Y'} 
} \]
in $\mathbf{Xsmod}_B$ to 
\[ 
\xymatrix{
{\X_1} \ar[d]_-{\widehat{(\varphi,\psi)}} \ar@<.99ex>[r]^-{D} \ar@<-.99ex>[r]_-{C} &{\X_0} \ar[l]|-{U} \ar[d]^-{\psi} \\ 
{\X'\rtimes_{\alpha}\Y'} \ar@<.99ex>[r]^-{\Pi'} \ar@<-.99ex>[r]_-{\Lambda'} &{\Y',} \ar[l]|(.40){\Sigma'}
}
\]
where $\widehat{(\varphi,\psi)}\colon X_1\rightarrow P_1(\X',\Y'),$ $\big(x=k\cdot UD(x)\big)\mapsto \big(\varphi(k),\psi D(x)\big),$ whose inverse associates a morphism $(F,G)\colon \mathfrak{X}\rightarrow \mathcal{S}(\delta',\alpha')$
\[ \xymatrix{
{\X_1} \ar[d]_-{F=\langle\widetilde{F},GD\rangle} \ar@<.99ex>[r]^-{D} \ar@<-.99ex>[r]_-{C} &{\X_0} \ar[l]|-{U} \ar[d]^-{G} \\ 
{\X'\rtimes_{\alpha}\Y'} \ar@<.99ex>[r]^-{\Pi'} \ar@<-.99ex>[r]_-{\Lambda'} &{\Y',} \ar[l]|(.40){\Sigma'}
}\]
of Schreier internal categories with
\[ \xymatrixcolsep{2.5pc}\xymatrix{
{\mathbb{K}(D)} \ar[d]_-{\widetilde{F}} \ar[r]^-{C_|} &{\X_0} \ar[d]^-{G} \\
{\mathbb{X}'}  \ar[r]_-{\delta'} &{\Y',} 
} \]
yields the desired adjunction $\mathcal{L}\dashv\mathcal{S},$ having the morphisms \eqref{eqn:unit_2} as unit and \eqref{eqn:counit_2} as counit.
\end{proof}
Again, observe that since \eqref{eqn:unit_2} is an isomorphism, it follows that the category of Schreier internal categories in $\mathbf{Cat}_B$ is equivalent to a coreflective subcategory of $\mathbf{Xsmod}_B,$ namely its full subcategory of crossed semimodules $(\X\xlongrightarrow{\delta}\Y,\alpha)$ with $\X$ totally disconnected.

Now, suppose that 
\[ \xymatrix{{\mathfrak{G}:\X_2} \ar[r]^-{M} &{\X_1}  \ar@(dl,dr)[]_-I  \ar@<.9ex>[r]^-{D} \ar@<-.9ex>[r]_-{C} &{\X_0} \ar[l]|{U} } \]
is a Schreier internal \emph{groupoid} in $\mathbf{Cat}_B.$ Then $\mathbb{K}(D)$ (defined, as above, so to have the disjoint union $\bigsqcup_{b\in B}D^{-1}(1_b)$ as set of arrows) is a groupoid on $B.$ Indeed, let $b\in B$ and fix an arrow $k\in D^{-1}(1_b).$ By the assumption, there exists $x=I(k)\in\X(b,b)$ such that $D(x)=C(k),$ $C(x)=D(k)$ and $M(k,x)=UD(k)=1_b.$ Since $(D,U)$ is a Schreier point in $\mathbf{Cat}_B,$ we can write
\[ x=k'\cdot UD(x)=k'\cdot UC(k)\]
for a unique $k'\in D^{-1}(1_b),$ and by \eqref{eqn:star} we have:
\begin{equation*}
\begin{split}
1_b=M(k,x)&=M\big(k,k'\cdot UC(k)\big) \\
&=M\big(k\cdot \underbrace{UD(k)}_{=1_b},k'\cdot UC(k)\cdot \underbrace{UD(k)}_{=1_b}\big) \\
&=k'\cdot k\cdot UD(k)\\
&=k'\cdot k.
\end{split}
\end{equation*}
Thus, any arrow $k\in D^{-1}(1_b)$ has a left inverse in $D^{-1}(1_b),$ which entails that the monoid $\big(D^{-1}(1_b),\cdot, 1_b\big)$ is a group. 

The converse is also true:
\begin{proposition}
If $(\X\xlongrightarrow{\delta}\Y,\alpha)$ is a crossed semimodule such that the subcategory $\I(\X)$ (as in \eqref{eqn:cat_I}) is a groupoid on $B,$ then the Schreier internal category $\mathcal{S}(\X\xlongrightarrow{\delta}\Y,\alpha)$ is an internal groupoid in $\mathbf{Cat}_B.$
\end{proposition}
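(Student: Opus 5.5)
The plan is to exhibit an explicit inversion morphism $I\colon\X\rtimes_{\alpha}\Y\rightarrow\X\rtimes_{\alpha}\Y$ in $\mathbf{Cat}_B$ for the internal category \eqref{eqn:semi-cat}. I would then check the groupoid axioms against the composition $\Xi$ elementwise.

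First I recall the bookkeeping from Step 2 of the proof of Theorem \ref{prop:adjunction_Xsmod_SCat}. An element $(z,x,y)\in C_1(\X,\Y)$ is the composable pair consisting of $(x,y)$ followed by $(z,\delta(x)\cdot y)$, since $\Pi(z,\delta(x)\cdot y)=\delta(x)\cdot y=\Lambda(x,y)$. Its composite is $\Xi(z,x,y)=(z\cdot x,y)$.

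Every component $x\in\X(b,b)$ of an arrow $(x,y)\in P_1(\X,\Y)$ lies in $\I(\X)$, so by hypothesis it has an inverse $x^{-1}\in\X(b,b)$. I therefore define
\[ I(b\xrightarrow{x}b,a\xrightarrow{y}b)\coloneqq\big(x^{-1},\delta(x)\cdot y\big). \]
This is again an arrow $a\rightarrow b$ of $\X\rtimes_{\alpha}\Y$, so $I$ is the identity on objects. I will check the four identities
\[ \Pi I=\Lambda,\qquad \Lambda I=\Pi,\qquad \Xi(x^{-1},x,y)=(1_b,y)=\Sigma\Pi(x,y),\qquad \Xi\big(x,x^{-1},\delta(x)\cdot y\big)=\big(1_b,\delta(x)\cdot y\big)=\Sigma\Lambda(x,y). \]
Each follows at once from $\delta$ being a functor, since $\delta(x^{-1})\cdot\delta(x)=1_b$. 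These are exactly the left- and right-inverse laws, so once $I$ is known to be a morphism in $\mathbf{Cat}_B$ it is the inversion of an internal groupoid.

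The only real obstacle is showing that $I$ preserves the composition of $\X\rtimes_{\alpha}\Y$, i.e.\ that $I$ is a functor. Let $(x,y)\colon a\rightarrow b$ and $(x',y')\colon b\rightarrow c$, and put $v\coloneqq\alpha(y')(x^{-1})$. Since $\alpha(y')$ is a monoid homomorphism, $v=\alpha(y')(x)^{-1}$.

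I compute both sides. For the composite, $I\big((x',y')\cdot(x,y)\big)=\big(v\cdot x'^{-1},\,\delta(x')\cdot\delta(\alpha(y')(x))\cdot y'\cdot y\big)$, and axiom (1) of Definition \ref{def:crossed_semimodule} rewrites the second entry as $\delta(x')\cdot y'\cdot\delta(x)\cdot y$. For the product, $I(x',y')\cdot I(x,y)=\big(x'^{-1}\cdot\alpha(\delta(x')\cdot y')(x^{-1}),\,\delta(x')\cdot y'\cdot\delta(x)\cdot y\big)$.

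The second entries agree, so it remains to prove
\[ x'^{-1}\cdot\alpha\big(\delta(x')\big)(v)=v\cdot x'^{-1}. \]
For this I apply axiom (2) with $w=x'$ and the element $v$, which gives $\alpha(\delta(x'))(v)\cdot x'=x'\cdot v$. Hence $\alpha(\delta(x'))(v)=x'\cdot v\cdot x'^{-1}$, and multiplying on the left by $x'^{-1}$ gives the required identity. This completes the verification that $\mathcal{S}(\X\xrightarrow{\delta}\Y,\alpha)$ is an internal groupoid in $\mathbf{Cat}_B$.
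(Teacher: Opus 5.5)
Your proposal is correct and follows essentially the same route as the paper. It uses the same inversion $I(x,y)=\big(x^{-1},\delta(x)\cdot y\big)$, the same four identities for $\Pi I$, $\Lambda I$ and the two $\Xi$-composites, and the same proof that $I$ preserves composition: axiom (1) for the second component, and axiom (2) with $w=x'$ applied to $\alpha(y')(x^{-1})$ for the first.
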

\begin{proof}
Define $I\colon \X\rtimes_{\alpha}\Y\rightarrow \X\rtimes_{\alpha}\Y$ by 
\[ I\colon P_1(\X,\Y)\rightarrow P_1(\X,\Y), \ (b\xrightarrow{x}b,a\xrightarrow{y}b)\mapsto \big(x^{-1},\Lambda(x,y)=\delta(x)\cdot y\big),\]
where $x^{-1}$ denotes the inverse of $x$ in $\X(b,b).$

The only non trivial verification to prove that $I$ is a morphism in $\mathbf{Cat}_B$ is that it preserves the composition $m_P$ of \eqref{eqn:semidirect}:
\begin{equation*}
\begin{split}
I\big( (x',y')\cdot(x,y)\big)&=I\big(x'\cdot\alpha(y')(x),y'\cdot y\big)\\
&=\Big(\big(x'\cdot\alpha(y')(x)\big)^{-1},\delta\big(x'\cdot\alpha(y')(x)\big)\cdot y'\cdot y\Big)\\
&=\Big(\alpha(y')(x^{-1})\cdot (x')^{-1},\delta(x')\cdot\underbrace{\delta\big(\alpha(y')(x)\big)\cdot y'}_{=y'\cdot \delta(x)}\cdot y\Big)\\
&=\big(\alpha(y')(x^{-1})\cdot (x')^{-1},\delta(x')\cdot y'\cdot \delta(x)\cdot y\big);
\end{split}
\end{equation*}
at the same time
\begin{equation*}
\begin{split}
I(x',y')\cdot I(x,y)&=\big((x')^{-1},\delta(x')\cdot y'\big)\cdot\big(x^{-1},\delta(x)\cdot y\big) \\
&=\Big( (x')^{-1}\cdot\alpha\big(\delta(x')\cdot y'\big)(x^{-1}),\delta(x')\cdot y'\cdot \delta(x)\cdot y\Big),
\end{split}
\end{equation*}
and indeed $ (x')^{-1}\cdot\alpha\big(\delta(x')\cdot y'\big)(x^{-1})=\alpha(y')(x^{-1})\cdot (x')^{-1}$ because
\begin{equation*}
\begin{split}
\alpha\big(\delta(x')\cdot y'\big)(x^{-1})\cdot x' &=\alpha\big(\delta(x')\big)\big(\alpha(y')(x^{-1})\big)\cdot x'\\
&=x'\cdot \alpha(y')(x^{-1})
\end{split}
\end{equation*}
by Definition \ref{def:crossed_semimodule}$(2).$

Moreover, we have for every $(x,y)\in P_1(\X,\Y)$
\[ \Pi\big(I(x,y)\big)=\Pi\big(x^{-1},\Lambda(x,y)\big)=\Lambda(x,y) \]
and 
\[ \Lambda\big(I(x,y)\big)=\Lambda\big(x^{-1},\delta(x)\cdot y\big)=\delta(x^{-1})\cdot\delta(x)\cdot y=y=\Pi(x,y). \]
Finally, the composition of $(b\xrightarrow{x}b,a\xrightarrow{y}b)$ followed by $I(x,y)=\big(x^{-1},\delta(x)\cdot y\big)$ with respect to the composition law $\Xi$ of \eqref{eqn:semi-cat} is given by
\[ \Xi(x^{-1},x,y)=(x^{-1}\cdot x,y)=(1_b,y)=\Sigma(y)=\Sigma\Pi(x,y),\]
and the composition of $I(x,y)$ followed by $(x,y)=\big(x,\delta(x^{-1})\cdot\delta(x)\cdot y\big)$ is given by
\[ \Xi\big(x,x^{-1},\delta(x)\cdot y\big)=\big(x\cdot x^{-1},\delta(x)\cdot y\big)=\big(1_b,\Lambda(x,y)\big)=\Sigma\Lambda(x,y).\]
\end{proof}
All this motivates the ensuing:
\begin{definition}
A \emph{crossed module} in $\mathbf{Cat}_B$ is a crossed semimodule $(\X\xlongrightarrow{\delta}\Y,\alpha)$ in which $\X$ is a totally disconnected groupoid on $B.$
\end{definition}
Denote by $\mathbf{Xmod}_B\subseteq\mathbf{Xsmod}_B$ the full subcategory of crossed modules in $\mathbf{Cat}_B.$

Then, by Theorem \ref{prop:adjunction_Xsmod_SCat}, we have at once:
\begin{corollary}
The adjunction $\mathcal{L}\dashv\mathcal{S}$ yields an equivalence of categories
\begin{equation}\label{eqn:groupoids_equiv} \mathrm{SGpd}(\mathbf{Cat}_B)\cong \mathbf{Xmod}_B\end{equation}
between Schreier internal groupoids and crossed modules in $\mathbf{Cat}_B.$
\end{corollary}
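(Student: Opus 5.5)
The plan is to restrict the adjunction $\mathcal{L}\dashv\mathcal{S}$ of Theorem \ref{prop:adjunction_Xsmod_SCat} to the full subcategories $\mathrm{SGpd}(\mathbf{Cat}_B)\subseteq\mathrm{SCat}(\mathbf{Cat}_B)$ and $\mathbf{Xmod}_B\subseteq\mathbf{Xsmod}_B$, and then to show that on these subcategories both the unit and the counit are isomorphisms.

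First, $\mathcal{L}$ maps $\mathrm{SGpd}(\mathbf{Cat}_B)$ into $\mathbf{Xmod}_B$. For a Schreier internal groupoid $\mathfrak{G}$, the category $\mathcal{L}(\mathfrak{G})$ has domain $\mathbb{K}(D)$. This is totally disconnected by construction, since its arrows are $\bigsqcup_b D^{-1}(1_b)$ and $D$ is the identity on objects. The computation preceding the Proposition shows that every $k\in D^{-1}(1_b)$ has a left inverse in the monoid $D^{-1}(1_b)$. A monoid in which every element has a left inverse is a group (if $k'k=1$ and $k''k'=1$, then $k''=k''k'k=k$, so $kk'=1$). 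Hence $\mathbb{K}(D)$ is a totally disconnected groupoid, and $\mathcal{L}(\mathfrak{G})\in\mathbf{Xmod}_B$.

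Second, $\mathcal{S}$ maps $\mathbf{Xmod}_B$ into $\mathrm{SGpd}(\mathbf{Cat}_B)$. If $\X$ is a totally disconnected groupoid, then $\I(\X)=\X$ is a groupoid. By the preceding Proposition, $\mathcal{S}(\X\xrightarrow{\delta}\Y,\alpha)$ is an internal groupoid, and it is Schreier because it lies in $\mathrm{SCat}(\mathbf{Cat}_B)$. I need to note that being a groupoid is a property of an internal category, so $\mathrm{SGpd}(\mathbf{Cat}_B)$ is full in $\mathrm{SCat}(\mathbf{Cat}_B)$; likewise $\mathbf{Xmod}_B$ is full in $\mathbf{Xsmod}_B$ by definition. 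Consequently the restricted functors form an adjunction with the same unit and counit.

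Third, I check the unit and counit. The unit $\eta_{\mathfrak{X}}=\zeta_{\mathfrak{X}}^{-1}$ of \eqref{eqn:unit_2} is already an isomorphism for every Schreier internal category, by the Split Short Five Lemma. The counit \eqref{eqn:counit_2} is the inclusion $(\I(\X)\xrightarrow{\delta_|}\Y,\alpha)\rightarrowtail(\X\xrightarrow{\delta}\Y,\alpha)$. When $\X$ is totally disconnected we have $\I(\X)=\X$, so the counit is the identity. Therefore the restricted adjunction is an adjoint equivalence, which gives \eqref{eqn:groupoids_equiv}.

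I expect the main obstacle to be checking that the domain of $\mathcal{L}(\mathfrak{G})$ is genuinely a groupoid. That is, I must ensure the left-inverse argument really produces two-sided inverses inside $D^{-1}(1_b)$, and that the inverse map $I$ of the internal groupoid is compatible enough for the computation via \eqref{eqn:star} to go through. The monoid argument given above settles the two-sidedness; everything else is bookkeeping about fullness of the subcategories.
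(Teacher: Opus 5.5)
Your proposal is correct and follows the paper's own argument. You restrict $\mathcal{L}\dashv\mathcal{S}$ to the full subcategories, using the left-inverse computation in $D^{-1}(1_b)$ and the preceding Proposition to see that both functors restrict; then you note that the unit $\eta_{\mathfrak{X}}$ is always invertible and the counit is the identity when $\mathbb{X}$ is totally disconnected. Beyond the paper, you also spell out the standard step that a monoid in which every element has a left inverse is a group, together with the fullness and restriction bookkeeping, which the paper compresses into ``we have at once''.
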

This extends to $\mathbf{Cat}_B,$ for an arbitrary set $B,$ the analogous result of \cite{P-semi} for the category of monoids. 

It is possible to show that, by using the equivalence \eqref{eqn:groupoids_equiv}, one can describe the third cohomology group $H^3(\Y,A)$ of a small category $\Y$ with coefficients in a $\Y$-module $A\colon\Y\rightarrow\mathbf{Ab},$ as introduced in \cite{cocat}, by means of the direction functor \cite{aspherical, bourn-rodelo}. A description of the same cohomology group in terms of crossed extensions is outlined in \cite{golasinski}. The detailed study of $H^3(\Y,A),$ as well as of the other cohomology groups $\{H^n(\Y,A)\}_{n\geq 2},$ in terms of the above mentioned direction functors, will be carried out in \cite{ADM}.
%------------------------------------------------------------------------------------------------------------------------
\section*{Acknowledgments}
The author would like to thank Andrea Montoli for his mentorship and the valuable discussions on the subject.

The author is a member of the Gruppo Nazionale per le Strutture Algebriche, Geometriche e le loro Applicazioni (GNSAGA) dell'Istituto Nazionale di Alta Matematica ``Francesco Severi''.
%------------------------------------------------------------------------------------------------------------------------

\end{document}